\theoremstyle{plain}
\newtheorem{theorem}{Theorem}
\newtheorem{lemma}{Lemma}
\theoremstyle{definition}
\newtheorem{definition}{Definition}
\newcommand{\bell}{\textup{B}}
\begin{document}
	
\title{A formula for the $r$-coloured partition function in terms of the sum of divisors function and its inverse}

\author{Sumit Kumar Jha}
\address{International Institute of Information Technology\\
Hyderabad, India}
\email{kumarjha.sumit@research.iiit.ac.in}

%\thanks{The author was supported by an incentive grant of the National Research Foundation of South Africa.}
\subjclass[2010]{11P81}

\keywords{Partition function; Divisor function; Bell polynomials; r-coloured Partition}

\begin{abstract}
Let $p_{-r}(n)$ denote the $r$-coloured partition function, and $\sigma(n)=\sum_{d|n}d$ denote the sum of positive divisors of $n$. The aim of this note is to prove the following
$$
p_{-r}(n)=\theta(n)+\,\sum_{k=1}^{n-1}\frac{r^{k+1}}{(k+1)!}  \sum_{\alpha_1\,=  k}^{n-1} \, \sum_{\alpha_2\,=  k-1}^{\alpha_1-1} \cdots  \sum_{\alpha_k\, = 1}^{\alpha_{k-1}-1}\theta(n-\alpha_1) \theta(\alpha_1 -\alpha_2) \cdots \theta(\alpha_{k-1}-\alpha_k) \theta(\alpha_k)
$$
where $\theta(n)=n^{-1}\, \sigma(n)$, and its inverse
$$
\sigma(n)=n\,\sum_{r=1}^{n}\frac{(-1)^{r-1}}{r}\,\binom{n}{r}\,p_{-r}(n).
$$
\end{abstract}

\maketitle

\section{Main results}
\begin{definition}\cite{Chern}
A partition of a positive integer $n$ is a finite weakly decreasing sequence of positive integers $\lambda_{1}\geq \lambda_{2}\geq \cdots \geq \lambda_{r}>0$ such that $\sum_{i=1}^{r}\lambda_{i}=n$. The $\lambda_{i}$'s are called the \emph{parts} of the partition. Let $p(n)$ denote the number of partitions of $n$.\par 
A partition is said to be $r$-coloured if each part can occur as $r$ colours. Let $p_{-r}(n)$ denote the number of $r$ coloured partitions of $n$. The generating function for $p_{-r}(n)$ is given by
$$
\sum_{n=0}^{\infty}p_{-r}(n)q^{n}=E(q)^{-r}
$$
where
$$E(q):=\prod_{j=1}^{\infty}(1-q^{j})$$
where $|q|<1$.
\end{definition}
In the following, let $\sigma(n)=\sum_{d|n}d$ denote the sum of divisors of a positive integer $n$. \par 
We first prove the following.
\begin{theorem}
For all positive integers $n\geq 2$ we have
\begin{equation}
\label{th1}
p_{-r}(n)=\theta(n)+\sum_{k=1}^{n-1}\frac{r^{k+1}}{(k+1)!} \sum_{\alpha_1\,=  k}^{n-1} \, \sum_{\alpha_2\,=  k-1}^{\alpha_1-1} \cdots  \sum_{\alpha_k\, = 1}^{\alpha_{k-1}-1}\theta(n-\alpha_1) \theta(\alpha_1 -\alpha_2) \cdots \theta(\alpha_{k-1}-\alpha_k) \theta(\alpha_k)
\end{equation}
where $\theta(n)=n^{-1}\, \sigma(n)$.
\end{theorem}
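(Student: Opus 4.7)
The plan is to start from the generating function identity
$$\sum_{n\geq 0} p_{-r}(n)\,q^n = E(q)^{-r} = \prod_{j\geq 1}(1-q^j)^{-r}$$
and convert the product into an exponential sum by passing through the logarithm. First I would write
$$\log E(q)^{-r} = -r\sum_{j=1}^{\infty}\log(1-q^j) = r\sum_{j=1}^{\infty}\sum_{m=1}^{\infty}\frac{q^{jm}}{m},$$
then collect the terms with $jm=n$ and invoke the elementary identity $\sum_{d\mid n} 1/d = \sigma(n)/n$ to obtain the compact form
$$\log E(q)^{-r} = r\sum_{n=1}^{\infty}\theta(n)\,q^n.$$

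Exponentiating and expanding via the Taylor series of $\exp$ gives
$$E(q)^{-r} = \sum_{\ell\geq 0}\frac{r^{\ell}}{\ell!}\left(\sum_{n\geq 1}\theta(n)\,q^n\right)^{\!\ell}.$$
A Cauchy product expansion of the $\ell$-th power, followed by extracting the coefficient of $q^n$, yields the compositional formula
$$p_{-r}(n) = \sum_{\ell=1}^{n}\frac{r^{\ell}}{\ell!}\sum_{\substack{n_1+\cdots+n_\ell=n\\ n_i\geq 1}}\theta(n_1)\cdots\theta(n_\ell) \qquad (n\geq 1).$$

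The remaining step is purely combinatorial: recast the inner composition sum as the nested sum of the theorem. For $\ell\geq 2$ I would reindex by partial sums, setting $\alpha_j := n_{j+1}+n_{j+2}+\cdots+n_\ell$ for $1\leq j\leq \ell-1$ (with the conventions $\alpha_0=n$ and $\alpha_\ell=0$), so that $n_j=\alpha_{j-1}-\alpha_j$. The positivity constraints $n_j\geq 1$ then collapse to the strict chain $n>\alpha_1>\alpha_2>\cdots>\alpha_{\ell-1}\geq 1$, yielding the lower bounds $\alpha_j\geq \ell-j$ and upper bound $\alpha_1\leq n-1$, while the $\theta$-product becomes $\theta(n-\alpha_1)\theta(\alpha_1-\alpha_2)\cdots\theta(\alpha_{\ell-1})$. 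Setting $k=\ell-1$ reproduces exactly the nested-sum term with coefficient $r^{k+1}/(k+1)!$, and the isolated $\ell=1$ summand accounts for the leading $\theta(n)$ contribution.

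The main obstacle is not analytic but bookkeeping: matching the decreasing $\alpha$-chain bounds to the nested-summation limits, and verifying that the $\theta$-arguments telescope into the advertised product. Once that dictionary is in place, the theorem reduces to Taylor-expanding $\exp\bigl(r\sum_{n\geq 1}\theta(n)q^n\bigr)$.
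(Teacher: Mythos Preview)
Your argument is correct and more elementary than the paper's. The paper proves the same identity by invoking the partial Bell polynomials: it applies Fa\`a di Bruno's formula to $f(g(q))$ with $f(q)=e^{rq}$ and $g(q)=-\log E(q)$ to obtain
\[
p_{-r}(n)=\frac{1}{n!}\sum_{k=1}^{n}r^{k}\,B_{n,k}\bigl(1!\,\theta(1),\,2!\,\theta(2),\ldots,(n-k+1)!\,\theta(n-k+1)\bigr),
\]
and then plugs in Cvijovi\'c's explicit nested-sum formula for $B_{n,k+1}$. Your route bypasses this machinery entirely: expanding $\exp\bigl(r\sum_{m\ge1}\theta(m)q^{m}\bigr)$ termwise and reindexing the composition sum by partial sums is precisely a hands-on derivation of the special case of Cvijovi\'c's identity needed here. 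What you gain is self-containment (no Bell-polynomial formalism); what the paper's version buys is a clean intermediate statement (Lemma~2) that is reusable and that makes the polynomial dependence on $r$ transparent before unpacking the combinatorics.

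One small point worth flagging: your $\ell=1$ summand is $r\,\theta(n)$, not $\theta(n)$. The paper's own derivation gives the same, since $B_{n,1}(1!\,\theta(1),\ldots,n!\,\theta(n))=n!\,\theta(n)$, so the $k=1$ term in the Bell-polynomial sum is $r\,\theta(n)$. Thus the leading term in the displayed theorem should read $r\,\theta(n)$ rather than $\theta(n)$; a quick check at $n=2$ (where $p_{-r}(2)=(r^{2}+3r)/2$ while $\theta(2)+\tfrac{r^{2}}{2}\theta(1)^{2}=\tfrac{3}{2}+\tfrac{r^{2}}{2}$) confirms this. Your derivation actually establishes the corrected formula, so when you write that the $\ell=1$ term ``accounts for the leading $\theta(n)$ contribution'' you should record the factor of $r$ explicitly.
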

\begin{lemma}
We have 
$$
-\log(E(q))=\sum_{n=1}^{\infty}\theta(n)\,q^{n}.
$$
\end{lemma}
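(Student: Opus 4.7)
The plan is to start from the product definition of $E(q)$ and take the (principal branch) logarithm termwise, then expand each $-\log(1-q^j)$ as a Mercator series and reorganize the resulting double sum by collecting the coefficient of $q^n$.

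Concretely, I would first write
\[
-\log E(q) = -\sum_{j=1}^{\infty}\log(1-q^j) = \sum_{j=1}^{\infty}\sum_{m=1}^{\infty}\frac{q^{jm}}{m},
\]
valid for $|q|<1$. Since every term is nonnegative when $0<q<1$ and the resulting double series converges, Tonelli (or absolute convergence on compact subsets of the disk) justifies interchanging the two summations. Setting $n=jm$ and summing over ordered factorizations $n=j\cdot m$ — equivalently over divisors $d=j$ of $n$ with $m=n/d$ — gives
\[
\sum_{j=1}^{\infty}\sum_{m=1}^{\infty}\frac{q^{jm}}{m} \;=\; \sum_{n=1}^{\infty} q^n \sum_{d\mid n}\frac{1}{n/d} \;=\; \sum_{n=1}^{\infty}\frac{q^n}{n}\sum_{d\mid n}d \;=\; \sum_{n=1}^{\infty}\frac{\sigma(n)}{n}\,q^n,
\]
which is exactly $\sum_{n\ge 1}\theta(n)q^n$ by definition of $\theta$.

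There is essentially no hard step here; the only thing that needs attention is the justification of the rearrangement of the double series. I would handle that by noting that on any disk $|q|\le \rho<1$, the estimate $\sum_{j,m}|q|^{jm}/m \le \sum_j -\log(1-\rho^j)<\infty$ holds (since $\sum_j \rho^j$ converges), so the double sum is absolutely convergent and may be reindexed freely. This justifies the computation above and yields the stated identity as a power-series identity on $|q|<1$.
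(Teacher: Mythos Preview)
Your argument is correct and coincides with the paper's own proof: expand $\log E(q)=\sum_{j\ge1}\log(1-q^j)$ via the Mercator series and regroup by powers of $q$ to obtain the divisor sum $\sum_{d\mid n}1/d=\sigma(n)/n=\theta(n)$. The only difference is that you supply an explicit justification (absolute convergence/Tonelli) for the rearrangement, which the paper leaves implicit.
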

\begin{proof}
It is easy to see that
\begin{align*}
\log(E(q))&=\sum_{j=1}^{\infty}\log(1-q^{j})\\
&=-\sum_{j=1}^{\infty}\sum_{l=1}^{\infty}\frac{q^{lj}}{l}\\
&=-\sum_{n=1}^{\infty}q^{n}\left(\sum_{d|n}\frac{1}{d}\right).
\end{align*}
This completes the proof.
\end{proof}
\begin{definition}
For $n$ and $k$ non-negative integers, the partial $(n,k)$th partial Bell polynomials in the variables $x_{1},x_{2},\dotsc,x_{n-k+1}$ denoted by $B_{n,k}\equiv\bell_{n,k}(x_1,x_2,\dotsc,x_{n-k+1})$ \cite[p. 206]{Comtet} can be defined by
\begin{equation*}
\bell_{n,k}(x_1,x_2,\dotsc,x_{n-k+1})=\sum_{\substack{1\le i\le n,\ell_i\in\mathbb{N}\\ \sum_{i=1}^ni\ell_i=n\\ \sum_{i=1}^n\ell_i=k}}\frac{n!}{\prod_{i=1}^{n-k+1}\ell_i!} \prod_{i=1}^{n-k+1}\Bigl(\frac{x_i}{i!}\Bigr)^{\ell_i}.
\end{equation*}
\end{definition}
Cvijovi\'{c} \cite{Bell} gives the following formula for calculating these polynomials
\begin{align}
\label{explicit}
 B_{n, k + 1}  =  & \frac{1}{(k+1)!} \underbrace{\sum_{\alpha_1\,=  k}^{n-1} \, \sum_{\alpha_2\,=  k-1}^{\alpha_1-1} \cdots  \sum_{\alpha_k\, = 1}^{\alpha_{k-1}-1} }_{k }
\overbrace{\binom{n}{\alpha_1}  \binom{\alpha_1}{\alpha_2} \cdots  \binom{\alpha_{k-1}}{\alpha_k}}^{k}\nonumber
\\
&\cdot x_{n-\alpha_1} x_{\alpha_1 -\alpha_2} \cdots x_{\alpha_{k-1}-\alpha_k} x_{\alpha_k} \qquad(n\geq k+1, k\,=1, 2, \ldots)
\end{align}
\begin{lemma}
We have
\begin{equation}
    \label{lem2}
    p_{-r}(n)=\frac{1}{n!}\sum_{k=1}^{n}r^{k}\, B_{n,k}(1!\,\theta(1),2!\,\theta(2),\cdots,(n-k+1)!\, \theta(n-k+1)),
\end{equation}
where $\theta(n)=n^{-1}\sigma(n)$.
\end{lemma}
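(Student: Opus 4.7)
The plan is to translate the identity $E(q)^{-r} = \exp\bigl(-r\log E(q)\bigr)$ into a statement about partial Bell polynomials via the standard exponential formula, then apply the homogeneity of $B_{n,k}$ to extract the factor $r^k$.

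First, I would combine the generating function for $p_{-r}(n)$ with Lemma 1 to obtain
\[
\sum_{n=0}^{\infty} p_{-r}(n)\,q^{n} = E(q)^{-r} = \exp\!\left(r\sum_{m=1}^{\infty}\theta(m)\,q^{m}\right) = \exp\!\left(\sum_{m=1}^{\infty} x_{m}\,\frac{q^{m}}{m!}\right),
\]
where I set $x_{m} = r\cdot m!\,\theta(m)$ in order to match the normalization required by the Bell polynomial identity.

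Next, I would invoke the classical generating function for the partial Bell polynomials (Comtet, p.\ 133):
\[
\exp\!\left(\sum_{m=1}^{\infty} x_{m}\,\frac{q^{m}}{m!}\right) = 1 + \sum_{n=1}^{\infty}\frac{q^{n}}{n!}\sum_{k=1}^{n} B_{n,k}(x_{1},x_{2},\dotsc,x_{n-k+1}).
\]
Comparing coefficients of $q^{n}$ with the series on the left yields
\[
p_{-r}(n) = \frac{1}{n!}\sum_{k=1}^{n} B_{n,k}\bigl(r\cdot 1!\,\theta(1),\, r\cdot 2!\,\theta(2),\,\dotsc,\, r\cdot(n-k+1)!\,\theta(n-k+1)\bigr).
\]

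Finally, I would apply the well-known homogeneity relation $B_{n,k}(c\,x_{1}, c\,x_{2},\dotsc) = c^{k}\,B_{n,k}(x_{1},x_{2},\dotsc)$ with $c=r$ to pull the factor $r^{k}$ out of each term, giving exactly \eqref{lem2}. The only subtlety is the choice of normalization constants $m!$ absorbed into the $x_{m}$: once that bookkeeping is set correctly, the result is an immediate consequence of the exponential formula, so I do not anticipate a genuine obstacle beyond this scaling check.
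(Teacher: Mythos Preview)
Your argument is correct and is essentially the same as the paper's, just packaged slightly differently. The paper applies Fa\`a di Bruno's formula to $f(g(q))$ with $f(u)=e^{ru}$ and $g(q)=-\log E(q)$, so that $f^{(k)}(g(0))=r^{k}$ produces the factor $r^{k}$ directly and $g^{(m)}(0)=m!\,\theta(m)$ supplies the arguments of $B_{n,k}$; you instead invoke the exponential formula (the $f=\exp$ special case of Fa\`a di Bruno) with the $r$ folded into the variables and then recover $r^{k}$ via the homogeneity of $B_{n,k}$. The two routes are equivalent at the level of identities, and neither involves any step the other does not implicitly use.
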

\begin{proof}
Let $f(q)=e^{rq}$, and $g(q)=-\log(E(q))$. Using Fa\`{a} di Bruno's formula \cite[p. 134]{Comtet} we have
\begin{equation}
\label{faa}
{d^n \over dq^n} f(g(q)) = \sum_{k=1}^n f^{(k)}(g(q))\cdot B_{n,k}\left(g'(q),g''(q),\dots,g^{(n-k+1)}(q)\right).
\end{equation}
Since $f^{(k)}(q)=r^{k}\,e^{rq}$ and $g(0)=1$, letting $q\rightarrow 0$ in the above equation gives us equation \eqref{lem2}.
\end{proof}
Combining equations \eqref{lem2} and \eqref{explicit} we can conclude \eqref{th1}.\par 
Now we prove the following.
\begin{theorem}
\label{main2}
We have
\begin{equation}
    \label{seceq}
    \sigma(n)=n\,\sum_{r=1}^{n}\frac{(-1)^{r-1}}{r}\,\binom{n}{r}\,p_{-r}(n).
\end{equation}
\end{theorem}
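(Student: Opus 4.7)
The plan is to substitute the Bell polynomial expansion of $p_{-r}(n)$ given by \eqref{lem2} into the right-hand side of \eqref{seceq}, and then use a finite-difference identity to kill every term except the one coming from $B_{n,1}$, which equals $n!\,\theta(n) = (n-1)!\,\sigma(n)$.

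By \eqref{lem2}, $r \mapsto p_{-r}(n)$ is a polynomial in $r$ of degree at most $n$ with $p_{-0}(n) = 0$. Substituting $p_{-r}(n) = \frac{1}{n!}\sum_{k=1}^n r^k B_{n,k}$ into the right-hand side of \eqref{seceq} and interchanging the two sums,
$$n\sum_{r=1}^n \frac{(-1)^{r-1}}{r}\binom{n}{r}\,p_{-r}(n) \;=\; \frac{n}{n!}\sum_{k=1}^n B_{n,k}\,T_k, \qquad T_k := \sum_{r=1}^n (-1)^{r-1}\binom{n}{r}\,r^{k-1}.$$
The theorem thereby reduces to the claim that $T_1 = 1$ and $T_k = 0$ for $2 \leq k \leq n$.

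To establish this I would invoke the classical fact that the $n$-th forward difference of any polynomial of degree strictly less than $n$ vanishes at the origin. Since $r \mapsto r^{k-1}$ has degree at most $n-1$ whenever $1 \leq k \leq n$, we obtain $\sum_{r=0}^n (-1)^r\binom{n}{r}\,r^{k-1} = 0$; isolating the $r = 0$ contribution, which equals $0^{k-1}$, then gives $T_k = 0^{k-1}$, and this is $1$ for $k = 1$ and $0$ otherwise.

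Feeding this back, only the $k = 1$ term survives, and since $B_{n,1}(x_1,\ldots,x_n) = x_n = n!\,\theta(n)$, the right-hand side collapses to $\frac{n}{n!}\cdot n!\,\theta(n) = n\,\theta(n) = \sigma(n)$, which is \eqref{seceq}. The only nonroutine ingredient is the finite-difference evaluation of $T_k$, and that is classical; the rest is bookkeeping built on \eqref{lem2}.
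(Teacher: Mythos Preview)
Your argument is correct, and it takes a genuinely different route from the paper's proof. The paper does not substitute \eqref{lem2} into the right-hand side of \eqref{seceq}; instead it introduces two further lemmas in the ``opposite direction'': first (Lemma~3) it applies Fa\`a di Bruno to $f(q)=\log q$ and $g(q)=1/E(q)$ to write
\[
\theta(n)=\frac{1}{n!}\sum_{k=1}^{n}(-1)^{k-1}(k-1)!\,B_{n,k}\bigl(1!\,p(1),\dots,(n-k+1)!\,p(n-k+1)\bigr),
\]
and then (Lemma~4) it expands the generating function $(E(q)^{-1}-1)^k$ by the binomial theorem to obtain
\[
B_{n,k}\bigl(1!\,p(1),\dots\bigr)=\frac{n!}{k!}\sum_{r=1}^{k}(-1)^{k-r}\binom{k}{r}p_{-r}(n).
\]
Combining these and swapping the order of summation reduces the claim to the binomial identity $\sum_{k=r}^{n}\frac{1}{k}\binom{k}{r}=\frac{1}{r}\binom{n}{r}$.

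Your approach is more economical in the context of the paper: it reuses \eqref{lem2}, which is already in hand from the proof of Theorem~1, and needs only the classical vanishing of the $n$-th finite difference of $r^{k-1}$ for $k\le n$ to isolate the single term $B_{n,1}=n!\,\theta(n)$. No new Bell-polynomial identities or generating-function manipulations are required. The paper's route, on the other hand, yields the intermediate identity \eqref{lem4} as a byproduct, which has some independent interest as an explicit evaluation of Bell polynomials at shifted-factorial partition values. Either way the combinatorial crux is the same: a binomial orthogonality that singles out a lowest-order contribution.
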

\begin{lemma}
We have
\begin{equation}
    \label{lem3}
    \theta(n)=\frac{1}{n!}\sum_{k=1}^{n}(-1)^{k-1}\, (k-1)!\, B_{n,k}(1!\, p(1),2!\, p(2),\cdots,(n-k+1)!\, p(n-k+1))
\end{equation}
where $p(n)=p_{-1}(n)$ is the partition function.
\end{lemma}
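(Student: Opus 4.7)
The plan is to mirror the proof of Lemma 2, but with the roles of the exponential and logarithmic functions essentially swapped, using Faà di Bruno's formula to extract the coefficients of $-\log E(q)$ from those of $E(q)^{-1}$.

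First I would set $f(u) = \log u$ and $g(q) = E(q)^{-1}$. By the very definition of the (ordinary) partition function, $g(q) = \sum_{n=0}^{\infty} p(n)\, q^n$ with $p(0)=1$, so $g(0)=1$ and $g^{(j)}(0) = j!\, p(j)$ for $j \geq 1$. On the other hand, the composition satisfies
\begin{equation*}
f(g(q)) = \log\bigl(E(q)^{-1}\bigr) = -\log E(q) = \sum_{n=1}^{\infty}\theta(n)\, q^n
\end{equation*}
by Lemma 1. Consequently $\frac{d^n}{dq^n} f(g(q))\bigr|_{q=0} = n!\,\theta(n)$, which will be the left-hand side of the desired identity (after dividing by $n!$).

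Next I would compute $f^{(k)}(u) = (-1)^{k-1}(k-1)!\, u^{-k}$, so that $f^{(k)}(g(0)) = f^{(k)}(1) = (-1)^{k-1}(k-1)!$. Substituting into Faà di Bruno's formula \eqref{faa} and evaluating at $q=0$ yields
\begin{equation*}
n!\,\theta(n) = \sum_{k=1}^{n} (-1)^{k-1}(k-1)!\, B_{n,k}\bigl(1!\,p(1),\, 2!\,p(2),\, \ldots,\, (n-k+1)!\,p(n-k+1)\bigr),
\end{equation*}
and dividing through by $n!$ gives exactly \eqref{lem3}.

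There is no real obstacle here; the argument is essentially the mirror image of the proof of Lemma 2. The only thing one has to be careful about is that $g(0)=1$ (not $0$), so that the relevant evaluation of $f^{(k)}$ is at $u=1$ rather than at $u=0$; this is precisely what produces the coefficients $(-1)^{k-1}(k-1)!$ appearing in \eqref{lem3}. Having Lemma 3 in hand, Theorem \ref{main2} will then follow by substituting the explicit formula \eqref{explicit} for $B_{n,k}$ and combining with the recognition that $p_{-r}(n)$ appears through a binomial-type summation over $r$; that part I would handle afterwards, as a separate inversion step.
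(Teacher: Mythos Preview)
Your proposal is correct and is essentially identical to the paper's own proof: the paper also takes $f=\log$ and $g=E(q)^{-1}$, applies Fa\`{a} di Bruno's formula, and evaluates at $q=0$ using $g(0)=1$ and $f^{(k)}(1)=(-1)^{k-1}(k-1)!$. Your write-up is in fact more explicit than the paper's (you spell out $g^{(j)}(0)=j!\,p(j)$ and invoke Lemma~1 to identify the left-hand side as $n!\,\theta(n)$), but the argument is the same.
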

\begin{proof}
Let $f(q)=\log{q}$, and $g(q)=1/E(q)$. Then using Fa\`{a} di Bruno's formula \eqref{faa} we have
$$
{d^n \over dq^n} f(g(q)) = \sum_{k=1}^n f^{(k)}(g(q))\cdot B_{n,k}\left(g'(q),g''(q),\dots,g^{(n-k+1)}(q)\right).
$$
Since $f^{(k)}(q)=\frac{(-1)^{k-1}\, (k-1)!}{q^{k}}$ and $g(0)=1$, letting $q\rightarrow 0$ in the above equation gives us equation \eqref{lem3}.
\end{proof}
\begin{lemma}
We have for positive integers $n,k$
\begin{equation}
\label{lem4}
B_{n,k}(1!\, p(1),2!\, p(2),\cdots,(n-k+1)!\, p(n-k+1))=\frac{n!}{k!}\sum_{r=1}^{k}(-1)^{k-r}\binom{k}{r}p_{-r}(n)
\end{equation}
\end{lemma}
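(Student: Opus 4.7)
The plan is to use the exponential generating function identity for the partial Bell polynomials, namely
$$\sum_{n\geq k}B_{n,k}(x_1,x_2,\ldots)\frac{q^{n}}{n!}=\frac{1}{k!}\left(\sum_{n\geq 1}x_{n}\frac{q^{n}}{n!}\right)^{k},$$
which is one of the standard definitions of $B_{n,k}$ (see Comtet \cite[p.~133]{Comtet}). This converts the identity to be proved into a purely formal power series identity that can be verified by the binomial theorem.

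First, I would substitute $x_{i}=i!\,p(i)$. Then the inner series becomes
$$\sum_{n\geq 1}p(n)\,q^{n}=E(q)^{-1}-1,$$
using the generating function of $p(n)=p_{-1}(n)$ together with $p(0)=1$. Consequently,
$$\sum_{n\geq k}B_{n,k}(1!\,p(1),2!\,p(2),\ldots)\frac{q^{n}}{n!}=\frac{1}{k!}\bigl(E(q)^{-1}-1\bigr)^{k}.$$

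Next, I would expand the right-hand side by the binomial theorem:
$$\frac{1}{k!}\bigl(E(q)^{-1}-1\bigr)^{k}=\frac{1}{k!}\sum_{r=0}^{k}(-1)^{k-r}\binom{k}{r}E(q)^{-r}=\frac{1}{k!}\sum_{r=0}^{k}(-1)^{k-r}\binom{k}{r}\sum_{n\geq 0}p_{-r}(n)\,q^{n}.$$
Comparing the coefficient of $q^{n}$ on both sides for $n\geq 1$, and noting that $p_{0}(n)=0$ whenever $n\geq 1$ so that the $r=0$ term drops out, yields exactly the claimed formula
$$B_{n,k}(1!\,p(1),\ldots,(n-k+1)!\,p(n-k+1))=\frac{n!}{k!}\sum_{r=1}^{k}(-1)^{k-r}\binom{k}{r}p_{-r}(n).$$

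There is no real obstacle here; the only point that requires a little care is the bookkeeping at $r=0$, i.e.\ recognising that the constant term of $E(q)^{-1}$ contributes $1$ which is cancelled by the $-1$ inside the $k$th power, so that the $r=0$ contribution on the right is $(-1)^{k}/k!$ times the constant $1$, which only affects the coefficient of $q^{0}$ and does not appear in the $n\geq 1$ identity.
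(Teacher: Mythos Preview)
Your proof is correct and follows essentially the same route as the paper: invoke the exponential generating function for the partial Bell polynomials, identify the inner series as $E(q)^{-1}-1$, expand by the binomial theorem, and compare coefficients. You are slightly more explicit than the paper in justifying why the $r=0$ term can be dropped.
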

\begin{proof}
We start with the generating function for the partial Bell polynomials as follows
\begin{align*}
{\displaystyle \sum _{n=k}^{\infty }B_{n,k}(1!\, p(1),2!\, p(2),\cdots,(n-k+1)!\, p(n-k+1)){\frac {q^{n}}{n!}}}
&= {\frac {1}{k!}}\left(\sum _{j=1}^{\infty }p(j)q^{j}\right)^{k} \\
&=\frac{1}{k!}(E(q)^{-1}-1)^{k}\\
&=\frac{1}{k!}\sum_{r=0}^{k}(-1)^{k-r}\binom{k}{r}E(q)^{-r}\\
&=\frac{1}{k!}\sum_{r=0}^{k}(-1)^{k-r}\binom{k}{r}\sum_{n=0}^{\infty}p_{-r}(n)q^{n}
\end{align*}
to conclude the equation \eqref{lem4}.
\end{proof}
\begin{proof}[Proof of Theorem \ref{main2}]
Combining equations \eqref{lem3} and \eqref{lem4} we have
\begin{align*}
\sum_{d|n}\frac{1}{d}&=\sum_{k=1}^{n}\frac{1}{k}\sum_{r=1}^{k}(-1)^{r-1}\, \binom{k}{r}\,p_{-r}(n)\\
&=\sum_{r=1}^{n}(-1)^{r-1}p_{-r}(n)\sum_{k=r}^{n}\frac{1}{k}\binom{k}{r}\\
&=\sum_{r=1}^{n}\frac{(-1)^{r-1}}{r}\,\binom{n}{r}\,p_{-r}(n).
\end{align*}
Now we can conclude our main result equation \eqref{seceq} after the fact that $\sum_{d|n}\frac{n}{d}=\sum_{j|n}j$.
\end{proof}

\end{document}